\theoremstyle{plain}
\newtheorem{theorem}{Theorem}[section]
\newtheorem{lemma}[theorem]{Lemma}
\theoremstyle{definition}
\newcommand{\C}{\mathbb{C}}
\newcommand{\Z}{\mathbb{Z}}
\newcommand{\ab}{\text{ab}}
\newcommand{\PP}{\mathbb{P}}
\newcommand{\OO}{\ensuremath{\mathcal{O}}}
\title{Homology of some surfaces with $p_g = q = 0$ isogenous to a product}
\author{Timofey Shabalin}
\address{State University Higher School of Economics, Department of Mathematics, 20 Myasnitskaya st, Moscow 101000, Russia} 
\email{shabalin.timofey@gmail.com}
\begin{document}
\maketitle

\begin{abstract}
Bauer and Catanese \cite{bauercat} have found 4 families of surfaces of general type with $p_g = q = 0$ which are quotients of the product of curves by the action of finite abelian group. We compute integral homology groups of these surfaces.
\end{abstract}

\section{Introduction}

There is a complete classification of surfaces of special type, that is, with Kodaira dimension $< 2$. But the question of classification of surfaces of general type is still open. The most interesting and complicated class of these surfaces is formed by surfaces with $p_g = q = 0$. Interest to these surfaces arose when Max Noether asked, whether a surface with $p_g = q = 0$ must be rational. First counterexample was constructed by Enriques. Later, counterexamples of general type were constructed by Godeaux and Campedelli.

Bauer and Catanese have worked on the classification of surfaces of general type with $p_g = q = 0$ which are covered by a product of curves. In \cite{bauercat} they give a complete list of such surfaces which are quotients by diagonal action of a finite abelian group on the product of curves. The question is reduced to the problem of finding all finite abelian groups which admit a system of generators with some properties. Such surfaces form 4 irreducible families. Derived categories of these surfaces were studied in \cite{GS}, \cite{lee}.

The main result of the paper is the computation of the homology groups with integral coefficients of all surfaces from these families. In the next section we recall some basic facts about such surfaces. The last section is devoted to the computation of homology groups.

I would like to thank Sergey Galkin, Yakov Kononov, Victor Kulikov, Evgeny Shinder for helpful discussions. I am grateful to my advisor Dmitri Orlov for his patience, encouragement and many valuable suggestions.

\section{Surfaces with $p_g = q = 0$ isogenous to a higher product with abelian group}\label{sec.1}

We consider smooth algebraic surfaces over $\C$. The geometric genus of a surface $S$ is the number $p_g = h^0(S,\Omega^2_S) = \dim H^0(S, \Omega^2_S)$. The irregularity of $S$ is $q = h^0(S, \Omega^1_S)$. By Hodge theory we have $p_g = h^2(S,\OO_S)$, $q = h^1(S,\OO_S)$. The Kodaira dimension $\kappa$ of $S$ is the degree of growth of $h^0(S, nK_S)$ in $n$, where $K_S$ is the canonical class. A surface is said to be of general type if $\kappa = 2$. 

One way of constructing surfaces of general type with $p_g = q = 0$ is by taking quotients by actions of finite groups. A surface $S$ is said to be isogenous to a higher product \cite{catanese} if it admits a finite unramified covering of the form $C_1 \times C_2$, where $C_1, C_2$ are curves of genera $\ge 2$. By \cite{catanese} it implies that $S$ is the quotient $(C_1 \times C_2)/G$ where $g(C_i) \ge 2$, $G$ is a finite group acting freely on $C_1 \times C_2$. Action is said to be of mixed type if $G$ exchanges the two factors, and of unmixed type if $G$ acts via a product action.

From now on $S$ will be the quotient $(C_1\times C_2)/G$, where the finite group $G$ acts on each curve $C_i$ of genus $\ge 2$ and the action on the product is free. We will also assume that $S$ has $p_g = q = 0$. The surface $S$ is a ramified covering of $C_1/G \times C_2/G$ and if there were non-zero differential forms on $C_1/G\times C_2/G$ we could pull them back to $S$. Thus the assumption $p_g = q = 0$ implies that $C_i/G$ is isomorphic to $\PP^1$.

Consider a ramified covering $p\colon C \to C/G \cong \PP^1$. Let $B$ be the branching locus of $p$: $B = \{p(x)\mid x\in C \text{ is stabilized by a non-trivial subgroup of } G\}$. Then we have an exact sequence

$$
1 \to \pi_1(C\setminus p^{-1}(B)) \to \pi_1(\PP^1\setminus B) \to G \to 1.
$$

Let $x_1, \dots, x_r$ be the distinct points of $B$, $\gamma_i$ be a simple geometric loop around~$x_i$. Each $\gamma_i$ is mapped to an element $g_i \in G$. Denote the order of~$g_i$ by~$m_i$. The orbifold fundamental group of the covering $p$ is defined to be the quotient of $\pi_1(\PP^1\setminus B)$ by the normal subgroup generated by elements $\gamma_i^{m_i}$.

In our situation, let $\Pi(i)$ for $i=1,2$ be the orbifold fundamental group of the covering $C_i \to C_i/G$. There is an exact sequence

$$
1 \to \pi_1(C_i) \to \Pi(i) \to G \to 1.
$$

Multiplying the two sequences we get

$$
1 \to \pi_1(C_1) \times \pi_2(C_2) \to \Pi(1) \times \Pi(2) \to G \times G \to 1.
$$

Then $\pi_1(S)$ is the preimage in $\Pi(1) \times \Pi(2)$ of the diagonal subgroup $G$ in $G\times G$ \cite{catanese}. Thus for abelian $G$ we have an exact sequence

\begin{equation}\label{seq}
1 \to \pi_1(S) \to \Pi(1) \times \Pi(2) \to G \to 1,
\end{equation}

where the right map is the composition of $\Pi(1) \times \Pi(2) \to G \times G$ with the map $G \times G \to G$ given by $(a,b) \mapsto a - b$.

The covering $C \to \PP^1$ is determined by the choice of branching points $x_1,\dots,x_r$ on $\PP^1$ and images $g_i\in G$ of loops around $x_i$ such that $g_1\dots g_r = 1$ and $g_1,\dots, g_r$ generate $G$. If $\{g_i\}$ is the system of generators corresponding to the covering $C_1~\to~\PP^1$, $\{h_j\}$ corresponds to $C_2 \to \PP^1$ then the action of $G$ on $C_1 \times C_2$ is free if and only if we have 

\begin{equation} \notag
\left(\bigcup \langle g_i \rangle \right)\cap \left(\bigcup \langle h_j \rangle \right) = \{1\},
\end{equation}

\noindent
where $\langle g_i \rangle$ is the subgroup generated by $g_i$.

Bauer and Catanese classified surfaces with $p_g = q = 0$ isogenous to a higher product with abelian group $G$ \cite{bauercat}.
They found all abelian groups $G$ admitting two systems of generators $\{g_i\}$, $\{h_i\}$ as above. It turned out that the only possible groups $G$ are $(\Z/2\Z)^3$, $(\Z/2\Z)^4$, $(\Z/3\Z)^2$, $(\Z/5\Z)^2$. Varying of the positions of branching points on the quotients $C_1/G$, $C_2/G$ corresponds to varying of a surface $S$ in a family. Bauer and Catanese found that these families have dimensions $5, 4, 2$ for groups $(\Z/2\Z)^3$, $(\Z/2\Z)^4$, $(\Z/3\Z)^2$ respectively and consist of two points for $(\Z/5\Z)^2$. In the next section we will find the homology groups of these surfaces.

\section{Computation of homology groups}

We want to find the first homology group with integral coefficients of each surface $S$ from one of the 4 families described in \cite{bauercat}. All these surfaces are quotients $(C_1\times C_2)/G$ of a product of two curves $C_i$ by the product action of a finite abelian group $G$. Both curves are ramified coverings of $C_i/G \cong \PP^1$. Since we have $H_1(S,\Z) \cong \pi_1(S)^{\ab} := \pi_1(S)/[\pi_1(S),\pi_1(S)]$, the problem is reduced to a group-theoretic computation: given two homomorphisms $\Pi(i) \to G$, find the abelianization of the kernel in the sequence \eqref{seq}.

The group $\Pi(1)$ has the form $\langle a_1,\dots,a_n \mid a_1^{k_1}, \dots, a_n^{k_n}, a_1\cdots a_n \rangle$, where $n$ is the number of points in the branching locus $B$ of the covering $C_1 \to \PP^1$, $a_j$ is the class of a loop around the point $x_j \in B$ and $k_j$ is the order of the stabilizer of each preimage of $x_j$. The number $k_j$ is equal to the order of the image of $a_j$ in the group~$G$. In all our cases $G$ will be the product of cyclic groups of the same order, so all $k_j$ will be equal to some $k$. The group $\Pi(2)$ is of the same form, with generators denoted by $b_1, \dots, b_m$. We will denote the generators of the cyclic factors of $G$ by $e_i$. Now we will describe the homomorphisms $\phi: \Pi(1) \to G$ and $\psi: \Pi(2) \to G$, which are the same for all surfaces in each of the 4 families.

Case 1: $G = (\Z/2\Z)^3$. We have $k = 2$, $n=5$, $m=6$ and the homomorphism $\phi$ is given by 
$$
a_1 \mapsto e_1, \;
a_2 \mapsto e_2, \;
a_3 \mapsto e_3, \;
a_4 \mapsto e_1, \;
a_5 \mapsto e_2 + e_3.
$$
The homomorphism $\psi$ is
\begin{align*}
b_1 \mapsto e_1+e_2, \;
b_2 \mapsto e_1+e_3, \;
b_3 \mapsto e_1+e_2+e_3, \\
b_4 \mapsto e_1+e_2, \;
b_5 \mapsto e_1+e_3, \;
b_6 \mapsto e_1+e_2+e_3.
\end{align*}

Case 2: $G = (\Z/2\Z)^4$. We have $k = 2$, $n=m=5$. The homomorphism $\phi$ is given by
$$
a_1 \mapsto e_1, \;
a_2 \mapsto e_2, \;
a_3 \mapsto e_3, \;
a_4 \mapsto e_4, \;
a_5 \mapsto e_1+e_2+e_3+e_4
$$
and $\psi$ by
$$
b_1 \mapsto e_2+e_3+e_4, \;
b_2 \mapsto e_1+e_3+e_4, \;
b_3 \mapsto e_1+e_3, \;
b_4 \mapsto e_2+e_4, \;
b_5 \mapsto e_3+e_4.
$$

Case 3: $G = (\Z/3\Z)^2$. We have $k = 3$, $n=m=4$. The homomorphism $\phi$ is given by
$$
a_1 \mapsto e_1, \;
a_2 \mapsto e_2, \;
a_3 \mapsto -e_1, \;
a_4 \mapsto -e_2
$$
and $\psi$ by
$$
b_1 \mapsto e_1+e_2, \;
b_2 \mapsto e_1-e_2, \;
b_3 \mapsto -e_1-e_2, \;
b_4 \mapsto -e_1+e_2.
$$

Case 4: $G = (\Z/5\Z)^2$. We have $k = 5$, $n=m=3$. The homomorphism $\phi$ is given by
$$
a_1 \mapsto e_1, \;
a_2 \mapsto e_2, \;
a_3 \mapsto -e_1-e_2
$$
and $\psi$ by
$$
b_1 \mapsto e_1+2e_2, \;
b_2 \mapsto 3e_1+4e_2, \;
b_3 \mapsto e_1+4e_2.
$$

We introduce the following notation: $K = \pi_1(S)$, $F = \Pi(1) \times \Pi(2)$. We have an exact sequence

$$
1 \to K \to F \to G \to 1.
$$

In induces an exact sequence

\begin{equation} \label{seq2}
1 \to [F,F]/[K,K] \to F/[K,K] \to F/[F,F] \to 1.
\end{equation}

Later we will prove that the group $[F,F]/[K,K]$ is in fact abelian. We will first compute the groups $F/[F,F]$, $[F,F]/[K,K]$ and the 2-cocycle corresponding to the extension \eqref{seq2}.  Then the extension 

$$
1 \to [F,F]/[K,K] \to K/[K,K] \to K/[F,F] \to 1
$$
will be given by the restriction of the 2-cocycle to the group $K/[F,F]$.

We use the following convention for the commutator of two elements: $[a,b]=aba^{-1}b^{-1}$. The commutator satisfies two relations

\begin{align}\label{ident1}
[ab,c] = a[b,c]a^{-1}[a,c], \\ \label{ident2}
[a,bc] = [a,b]b[a,c]b^{-1}. 
\end{align}

\begin{lemma}
We have $[F,[F,F]]\subseteq [K,K]$.
\end{lemma}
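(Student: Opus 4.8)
The plan is to exploit the product structure $F = \Pi(1)\times\Pi(2)$ together with the fact that \emph{both} homomorphisms $\phi\colon\Pi(1)\to G$ and $\psi\colon\Pi(2)\to G$ are surjective: this is part of the covering data, since the images of the loop generators are required to generate $G$. The point of the argument is that surjectivity of $\psi$ lets one "complete" any element of $\Pi(1)$ to a companion element of $\Pi(2)$ so that the pair lands in the diagonal preimage $K$, and symmetrically for $\phi$. Working in a direct product also makes commutators factor coordinatewise, so the identities \eqref{ident1}--\eqref{ident2} are not even needed here.

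First I would record two elementary facts. Since $G$ is abelian, $[F,F]\subseteq K$; and more precisely $[F,F] = [\Pi(1),\Pi(1)]\times[\Pi(2),\Pi(2)]$, because in a direct product $[(x_1,x_2),(y_1,y_2)] = ([x_1,y_1],[x_2,y_2])$, so the commutator subgroup is the product of the commutator subgroups of the factors. In particular every element of $[F,F]$ is of the form $(c_1,c_2)$ with $c_i\in[\Pi(i),\Pi(i)]$, hence $\phi(c_1)=0$ and $\psi(c_2)=0$.

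The heart of the proof is the claim that for every $u\in\Pi(1)$ and every $c\in[\Pi(1),\Pi(1)]$ one has $([u,c],1)\in[K,K]$, together with the mirror statement obtained by swapping the roles of $\Pi(1)$ and $\Pi(2)$. To prove the claim, choose $w\in\Pi(2)$ with $\psi(w)=\phi(u)$, which is possible because $\psi$ is onto; then $(u,w)\in K$. Also $(c,1)\in K$ because $\phi(c)=0$ (as $c$ is a commutator and $G$ is abelian) and $\psi(1)=0$. Computing in the direct product, $[(u,w),(c,1)] = ([u,c],[w,1]) = ([u,c],1)$, which therefore lies in $[K,K]$. The mirror statement is identical, using surjectivity of $\phi$.

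Finally, take an arbitrary generator $[f,c']$ of $[F,[F,F]]$, where $f=(f_1,f_2)\in F$ and $c'=(c_1,c_2)\in[F,F]$, so $c_i\in[\Pi(i),\Pi(i)]$ by the first step. Then
\[
[f,c'] = ([f_1,c_1],[f_2,c_2]) = ([f_1,c_1],1)\cdot(1,[f_2,c_2]),
\]
and by the claim and its mirror image each of the two factors lies in $[K,K]$. Since $[F,[F,F]]$ is generated by commutators of this form, the lemma follows. I do not expect a genuine obstacle; the only steps needing a little care are the identification $[F,F]=[\Pi(1),\Pi(1)]\times[\Pi(2),\Pi(2)]$ and, more essentially, the surjectivity of $\phi$ and $\psi$, which is precisely what makes the "completion" of an element of one factor to a member of $K$ possible.
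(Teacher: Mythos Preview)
Your proof is correct and follows essentially the same approach as the paper's: both arguments factor a generator of $[F,[F,F]]$ coordinatewise using the product structure $F=\Pi(1)\times\Pi(2)$, and then use surjectivity of $\phi$ and $\psi$ to complete each coordinate to an element of $K$. The only cosmetic difference is that you work with an arbitrary element $(c_1,c_2)\in[F,F]=[\Pi(1),\Pi(1)]\times[\Pi(2),\Pi(2)]$, while the paper phrases things with a single commutator $[p_2q_2,p_3q_3]$ in the inner slot; the underlying idea is identical.
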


\begin{proof}
The group $[F,[F,F]]$ is generated by elements of the form $[p_1q_1,[p_2q_2,p_3q_3]]$, where $p_i$ are elements of $\Pi(1)$, considered as the subgroup of $F$ and $q_i \in \Pi(2)$. We have $[p_1q_1,[p_2q_2,p_3q_3]] = [p_1,[p_2,p_3]][q_1,[q_2,q_3]]$. Since $G$ is abelian, $[p_2,p_3],[q_2,q_3]\in K$. The homomorphisms $\phi,\psi$ are surjective, so there exist $p_1'\in \Pi(1), q_1'\in \Pi(2)$ such that $p_1'q_1, p_1q_1' \in K$. Then $[p_1,[p_2,p_3]][q_1,[q_2,q_3]]=[p_1q_1',[p_2,p_3]][p_1'q_1,[q_2,q_3]]\in [K,K]$.
\end{proof}

Therefore, the group $[F,F]/[K,K]$ is abelian. The identities (\ref{ident1},~\ref{ident2}) imply that the commutator map
$$
[\cdot,\cdot]\colon F\times F \to [F,F]/[K,K]
$$
is bilinear and skew-symmetric. It induces the surjective homomorphism
$$
\tau\colon \Lambda^2 F^{\ab} \to [F,F]/[K,K],
$$
taking $x \wedge y$ to the class of $[x,y]$. Here by $\Lambda^2 F^{\ab}$ we mean the quotient of the tensor product $F^{\ab} \otimes F^{\ab}$, taken in the category of abelian groups, by the subgroup generated by elements of the form $x \otimes x$.

Suppose $p_1 \in K \cap \Pi(1)$, $p_2q_2$ an arbitrary element of $F$. There exists $q_2'\in \Pi(2)$ such that $p_2q_2'\in K$. Then $\tau(p_1 \wedge p_2q_2) = \tau(p_1 \wedge p_2) = \tau (p_1 \wedge p_2q_2') = 0$. Analogously, if $q_1 \in K \cap \Pi(2)$, then $\tau(q_1 \wedge p_2q_2) = 0$. Let us define 
$$
\sigma\colon \Lambda^2G \to [F,F]/[K,K]
$$
by the rule $\sigma(g_1 \wedge g_2) = \tau(p_1 \wedge p_2)$, where $p_i \in \Pi(1)$ are arbitrary elements such that $\phi(p_i) = g_i$. It is well-defined by the above. If $p_1q_1, p_2q_2 \in K$, then we have $\phi(p_i) = -\psi(q_i)$ and $\tau(p_1q_1 \wedge p_2q_2) = 0$, so $\tau(p_1 \wedge p_2) = -\tau(q_1 \wedge q_2)$. This means that for arbitrary $p,p'\in \Pi(1)$ and $q,q'\in \Pi(2)$ we have $\tau(p\wedge p') = \sigma(\phi(p) \wedge \phi(p'))$ and $\tau(q\wedge q') = -\sigma(\psi(q) \wedge \psi(q'))$. Thus the image of $\tau$ is generated by the image of $\sigma$ and the homomorphism $\sigma$ is surjective.

Applying the identities (\ref{ident1},~\ref{ident2}), we find that in the group $[F,F]/[F,[F,F]]$ we have
\begin{align*}
1 = a_1^k\cdots a_n^k = a_1^k \cdots a_{n-1}^k(a_{n-1}^{-1}\cdots a_1^{-1})^k = \\
 \frac{k(k-1)}2 \prod_{i<j<n}[a_i^{-1},a_j^{-1}] = \frac{k(k-1)}2 \prod_{i<j<n}[a_i,a_j].
\end{align*}

This means that the elements $\frac{k(k-1)}2 \sum_{i<j<n}\phi(a_i)\wedge\phi(a_j)$, $\frac{k(k-1)}2 \sum_{i<j<m}\psi(b_i)\wedge\psi(b_j)$ lie in the kernel of $\sigma$. We will denote by $H$ the quotient of $\Lambda^2 G$ by the subgroup generated by these two elements and, by abuse of notation, by $\sigma$ the induced homomorphism $\sigma\colon H \to [F,F]/[K,K]$.

\begin{lemma}
The homomorphism $\sigma\colon H \to [F,F]/[K,K]$ is an isomorphism.
\end{lemma}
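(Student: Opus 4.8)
The plan is to produce a homomorphism $\rho\colon[F,F]/[K,K]\to H$ with $\rho\circ\sigma=\mathrm{id}_H$; since $\sigma$ is already surjective, this forces it to be an isomorphism. I would obtain $\rho$ by factoring a map $\pi$ through $\tau$. Write $F^{\ab}=A\oplus B$ with $A=\Pi(1)^{\ab}$, $B=\Pi(2)^{\ab}$, so $\Lambda^2F^{\ab}=\Lambda^2A\oplus(A\otimes B)\oplus\Lambda^2B$, and let $\pi\colon\Lambda^2F^{\ab}\to H$ be the map equal to $\Lambda^2(\phi^{\ab})$ on $\Lambda^2A$ (followed by $\Lambda^2G\to H$), equal to $-\Lambda^2(\psi^{\ab})$ on $\Lambda^2B$, and equal to $0$ on $A\otimes B$; it is surjective because $\phi^{\ab}$ is. The three identities established just before the statement — $\tau(p\wedge p')=\sigma(\phi(p)\wedge\phi(p'))$ and $\tau(q\wedge q')=-\sigma(\psi(q)\wedge\psi(q'))$ for $p,p'\in\Pi(1)$, $q,q'\in\Pi(2)$, together with $\tau(p\wedge q)=0$ (since $\Pi(1)$ and $\Pi(2)$ commute in $F$) — say exactly that $\sigma\circ\pi=\tau$. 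Hence it suffices to prove $\ker\tau\subseteq\ker\pi$: then $\pi$ descends to $\rho$ with $\rho\circ\tau=\pi$, and, $\pi$ being surjective, $\rho\circ\sigma=\mathrm{id}_H$.

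The next step is to describe $\ker\tau$. By the previous lemma $[F,[F,F]]\subseteq[K,K]$, so $\tau$ factors through the commutator map $c\colon\Lambda^2F^{\ab}\twoheadrightarrow[F,F]/[F,[F,F]]$, with $W:=\ker c$. Since $K$ is generated by $[F,F]$ together with lifts of $\bar K:=\ker(F^{\ab}\to G)$, and $[[F,F],F]\subseteq[F,[F,F]]$, the image of $[K,K]$ in $[F,F]/[F,[F,F]]$ equals $c(\langle\bar K\wedge\bar K\rangle)$, where $\langle\bar K\wedge\bar K\rangle\subseteq\Lambda^2F^{\ab}$ is the subgroup spanned by the $\bar c\wedge\bar c'$ with $\bar c,\bar c'\in\bar K$. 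Therefore $\ker\tau=W+\langle\bar K\wedge\bar K\rangle$. Moreover, because $\Pi(1)$ and $\Pi(2)$ commute, $[F,F]/[F,[F,F]]$ is the direct sum over $i=1,2$ of $[\Pi(i),\Pi(i)]/[\Pi(i),[\Pi(i),\Pi(i)]]$, and correspondingly $W=(A\otimes B)\oplus W_1\oplus W_2$, where $W_i=\ker\big(\Lambda^2\Pi(i)^{\ab}\to[\Pi(i),\Pi(i)]/[\Pi(i),[\Pi(i),\Pi(i)]]\big)$.

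First, $\pi(\langle\bar K\wedge\bar K\rangle)=0$: for $\bar c=\bar p+\bar q$ and $\bar c'=\bar p'+\bar q'$ in $\bar K\subseteq A\oplus B$ one has $\phi^{\ab}(\bar p)=\psi^{\ab}(\bar q)$ and $\phi^{\ab}(\bar p')=\psi^{\ab}(\bar q')$, while the $\Lambda^2A$- and $\Lambda^2B$-components of $\bar c\wedge\bar c'$ are $\bar p\wedge\bar p'$ and $\bar q\wedge\bar q'$, so $\pi(\bar c\wedge\bar c')=\phi^{\ab}(\bar p)\wedge\phi^{\ab}(\bar p')-\psi^{\ab}(\bar q)\wedge\psi^{\ab}(\bar q')=0$. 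The crux is the remaining assertion $\pi(W)=0$, i.e. $\Lambda^2(\phi^{\ab})(W_1)=0$ and $\Lambda^2(\psi^{\ab})(W_2)=0$ in $H$. For this I would prove that, for $\Pi=\langle a_1,\dots,a_n\mid a_1^k,\dots,a_n^k,a_1\cdots a_n\rangle$, the abelian group $[\Pi,\Pi]/[\Pi,[\Pi,\Pi]]$ is isomorphic to $\Lambda^2(\Pi^{\ab})$ modulo the single element $\frac{k(k-1)}{2}\sum_{i<j<n}\bar a_i\wedge\bar a_j$; then $W_i=\big\langle\frac{k(k-1)}{2}\sum_{i<j<n}\bar a_i\wedge\bar a_j\big\rangle$, so $\Lambda^2(\phi^{\ab})(W_1)$ is generated by $\frac{k(k-1)}{2}\sum_{i<j<n}\phi(a_i)\wedge\phi(a_j)$, which is one of the two defining relators of $H$ and hence vanishes there, and likewise for $W_2$. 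One inclusion is the computation displayed in the excerpt. For the other, let $M$ be the quotient of the free class-$2$ nilpotent group on $\alpha_1,\dots,\alpha_{n-1}$ by the relations $\alpha_i^k=1$: mapping $M$ onto the mod-$k$ Heisenberg groups on pairs $(\alpha_i,\alpha_j)$ shows $[M,M]\cong\Lambda^2(\Pi^{\ab})$; then, with $N:=M/\langle(\alpha_1\cdots\alpha_{n-1})^k\rangle$ (the extra relation being $\frac{k(k-1)}{2}\sum_{i<j}[\alpha_i,\alpha_j]=1$ by the class-$2$ power formula, so $[N,N]\cong\Lambda^2(\Pi^{\ab})/\langle\frac{k(k-1)}{2}\sum_{i<j<n}\bar a_i\wedge\bar a_j\rangle$), the assignment $a_i\mapsto\alpha_i$ ($i<n$), $a_n\mapsto(\alpha_1\cdots\alpha_{n-1})^{-1}$ defines a surjection $\Pi\twoheadrightarrow N$; comparing orders with the first inclusion yields the isomorphism.

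The main obstacle is precisely this computation of $[\Pi,\Pi]/[\Pi,[\Pi,\Pi]]$ — equivalently, the statement that $\Pi$ admits no relation among the commutators of the $a_i$ beyond the one forced by $a_1\cdots a_n=1$ (and, within that, the structure of the $2$-torsion free class-$2$ nilpotent group needed when $k=2$). In the four families $k\in\{2,3,5\}$ and $n-1\le 5$, so if the uniform argument proves delicate one can fall back on a direct case-by-case verification; everything else above is bookkeeping.
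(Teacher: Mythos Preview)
Your argument is correct and complete. The reduction to $\ker\tau\subseteq\ker\pi$, the decomposition $\ker\tau=W+\langle\bar K\wedge\bar K\rangle$, and the identification of $W_i$ with the cyclic group generated by $\frac{k(k-1)}{2}\sum_{i<j}\bar a_i\wedge\bar a_j$ via the class-$2$ nilpotent model all go through; the Heisenberg quotients do show that the $[\alpha_i,\alpha_j]$ are independent of order $k$ in $M$, and since $(\alpha_1\cdots\alpha_{n-1})^k$ is central in $M$ the passage to $N$ kills exactly the one advertised element of $[M,M]$.

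The paper takes a different, more combinatorial route: it builds an explicit inverse $\alpha\colon[F,F]/[K,K]\to H$ at the level of words in the generators $a_1,\dots,a_{n-1},b_1,\dots,b_{m-1}$, essentially counting ``inversions'' (pairs of letters in the wrong index order) and checking directly that this is invariant under inserting $xx^{-1}$, $x^k$, and $(a_1\cdots a_{n-1})^k$, is additive under concatenation, and is conjugation-invariant. Your approach is more structural: you isolate the single group-theoretic fact $[\Pi,\Pi]/[\Pi,[\Pi,\Pi]]\cong\Lambda^2\Pi^{\ab}/\langle r\rangle$ and deduce the lemma from it, which makes clear exactly where the relator $r$ comes from and would generalize cleanly to other presentations. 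The paper's approach, by contrast, avoids any auxiliary nilpotent constructions and is entirely self-contained at the word level; in effect its map $\alpha$ \emph{is} the homomorphism to the universal class-$2$ quotient you build, written out by hand. The two proofs are doing the same work in different languages.
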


\begin{proof}
We will describe the inverse homomorphism $\alpha\colon [F,F]/[K,K] \to H$. We will use all generators of the groups $\Pi(1)$, $\Pi(2)$, except the last one. Let $\Sigma_1$ be the set $\{a_1^{\pm1},\dots,a_{n-1}^{\pm1}\}$ and $\Sigma_2 = \{b_1^{\pm1},\dots,b_{m-1}^{\pm1}\}$. Each element of $[F,F] = [\Pi(1),\Pi(1)] \times [\Pi(2),\Pi(2)]$ can be represented by a pair of words $(u,v)$ in the letters $\Sigma_1$, $\Sigma_2$ respectively, such that the total degree of $u$ in each $a_i$ is divisible by $k$, and the same is true about the degrees of $v$ in $b_j$'s. Suppose $(u,v) = (x_1\dots x_c, y_1 \dots y_d)$, where $x_l = a_{i(l)}^{\pm 1}$, $y_l = b_{j(l)}^{\pm 1}$. We define
$$
\alpha(u,v) = \sum_{\substack{r<s \\ i(r) > i(s)}} \phi(x_r) \wedge \phi(x_s) - \sum_{\substack{r<s \\ j(r) > j(s)}} \psi(y_r) \wedge \psi(y_s).
$$
If we add $xx^{-1}$ or $x^k$ for $x\in \Sigma_1$ in an arbitrary place in the word $u$, the image $\alpha(u,v)$ will not change. If we add $(a_1\dots a_{n-1})^k$, the value will change by $\frac{k(k-1)}2 \sum_{i>j} \phi(a_i)\wedge\phi(a_j)$, which is zero in $H$. Thus we have shown that $\alpha(u,v)$ does not depend on the choice of the word $u$ representing the given element of $[\Pi(1),\Pi(1)]$. The same is true about $v$.

For the concatenation of two pairs of words we have
$$
\alpha(uu',vv') = \alpha(u,v) + \alpha(u',v') + R,
$$
where $R$ is the sum
$$
\sum \phi(a_i^{\pm 1}) \wedge \phi(a_j^{\pm 1}) - \sum \psi(b_r^{\pm 1}) \wedge \psi(b_s^{\pm 1})
$$
over all occurrencies of letters with index $i$ in $u$, with $j$ in $u'$, respectively with index $r$ in $v$, with $s$ in $v'$ and $i > j$, $r > s$. Since the total degrees of $u, u', v, v'$ in each generator are divisible by $k$, the image of $R$ in $H$ is zero. Therefore we have shown that $\alpha$ is a well defined homomorphism $[F,F] \to H$.

If $(u,v)$ is a pair of words representing an element of $[F,F]$, then
$$
\alpha(a_i u a_i^{-1}, v) = \alpha(u,v) + R,
$$
where $R$ is the sum
$$
\sum_{\substack{x_j \text{ from } u \\ i > j}}\phi(a_i)\wedge \phi(x_j) + \sum_{\substack{x_j \text{ from } u \\ j > i}}\phi(x_j)\wedge \phi(a_i^{-1}) = \phi(a_i) \wedge \phi(u) = 0.
$$
The same applies to conjugation by generators of $\Pi(2)$. We have shown that $\alpha(xyx^{-1}) = \alpha(y)$ for arbitrary $x \in F$, $y \in [F,F]$. Thus $\alpha$ is a homomorphism $[F,F]/[F,[F,F]] \to H$. 

From the definition of $\alpha$ it follows that $\alpha([a_i,a_j])=\phi(a_i) \wedge \phi(a_j)$ and $\alpha([b_i,b_j])= -\psi(b_i) \wedge \psi(b_j)$. By identities (\ref{ident1},~\ref{ident2}), this means that for arbitrary $p,p'\in \Pi(1)$ and $q,q'\in \Pi(2)$ we have $\alpha([p,p'])=\phi(p) \wedge \phi(p')$ and $\alpha([q,q'])= -\psi(q) \wedge \psi(q')$. Now it is clear that $\alpha$ gives the homomorphism $[F,F]/[K,K] \to H$ inverse to $\sigma$.
\end{proof}

Recall \cite{weibel} that extensions of an arbitrary group $G$ by abelian group $A$, equipped with the structure of a $G$-module,
\begin{equation}\label{exten}
0 \to A \to E \to G \to 1,
\end{equation}
are parameterized by elements of the cohomology group $H^2(G,A)$. Specifically, to an extension of the form \eqref{exten} we can assign a normalized 2-cocycle $\langle\cdot,\cdot\rangle \colon G\times G \to A$, given by $\langle g,h \rangle = \gamma(g)\gamma(h)\gamma(gh)^{-1}$, where $\gamma$ is an arbitrary section of the map $\pi\colon E \to G$ such that $\gamma(1) = 1$. By definition, a normalized 2-cocycle is a map $\langle\cdot,\cdot\rangle \colon G\times G \to A$ satisfying two conditions:
\begin{align*}
\langle g,1 \rangle = \langle 1,g \rangle = 0, \\
f\cdot \langle g,h \rangle - \langle fg,h \rangle + \langle f,gh \rangle - \langle f,g \rangle = 0.
\end{align*}
Given the 2-cocycle $\langle\cdot,\cdot \rangle$, the group $E$ can be reconstructed (up to isomorphism) as follows: let $E = A \times G$ as a set, with multiplication given by
$$
(a,g) \cdot (b,h) = (a + g\cdot b + \langle g,h \rangle, gh).
$$

We have the following lemma.

\begin{lemma}[cf. \cite{weibel}] \label{lemma}
Suppose the extension
$$
0 \to A \to E \to G \to 1
$$
is given by the 2-cocycle $\eta \in Z^2(G,A)$. Let $f\colon H \to G$ be a homomorphism of groups. Then the extension
$$
0 \to A \to E\times_G H \to H \to 1
$$
is given by the pullback $f^* \eta \in Z^2(H,A)$. Here the action of $H$ on $A$ is induced by the homomorphism $f$.
\end{lemma}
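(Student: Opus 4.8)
The plan is to make both extensions completely explicit, using the bar-type description of group extensions recalled just before the statement, and then compare them directly. Write $\iota\colon A \hookrightarrow E$ and $\pi\colon E \twoheadrightarrow G$ for the maps in the given extension, and fix a normalized set-theoretic section $\gamma\colon G \to E$ of $\pi$, so that $\gamma(1)=1$ and $\eta(g,h) = \gamma(g)\gamma(h)\gamma(gh)^{-1}$. Recall that by definition $E\times_G H = \{(x,h)\in E\times H : \pi(x) = f(h)\}$, with projection $q\colon E\times_G H \to H$, $(x,h)\mapsto h$, and with $A$ sitting inside via $a\mapsto(\iota(a),1)$ (well-defined since $\pi(\iota(a)) = 1 = f(1)$). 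First I would check that $0\to A\to E\times_G H \xrightarrow{q} H\to 1$ really is an extension: $q$ is surjective because $\pi$ is (given $h\in H$, choose $x\in E$ with $\pi(x)=f(h)$; then $(x,h)$ is a preimage), and $\ker q$ consists of the pairs $(x,1)$ with $\pi(x)=f(1)=1$, i.e. $x\in\iota(A)$, so $\ker q = \iota(A)\times\{1\}$ is exactly the image of $A$.

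Next, define $\tilde\gamma\colon H \to E\times_G H$ by $\tilde\gamma(h) = (\gamma(f(h)),h)$. This lands in the fibre product since $\pi(\gamma(f(h))) = f(h)$, it is a section of $q$, and it is normalized: $\tilde\gamma(1) = (\gamma(f(1)),1) = (\gamma(1),1) = (1,1)$. The normalized $2$-cocycle attached to the extension $E\times_G H$ through the section $\tilde\gamma$ is then
\[
\tilde\gamma(h)\tilde\gamma(h')\tilde\gamma(hh')^{-1} = \bigl(\gamma(f(h))\gamma(f(h'))\gamma(f(h)f(h'))^{-1},\,1\bigr) = \bigl(\eta(f(h),f(h')),\,1\bigr),
\]
where in the middle coordinate I used that $f$ is a homomorphism, $f(h)f(h') = f(hh')$. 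Under the identification $A = \iota(A)\times\{1\}$ this is precisely $(f^*\eta)(h,h')$, which is the assertion of the lemma, once the module structures are matched.

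The only point that needs a word of care — and the only place where "$f^*\eta\in Z^2(H,A)$" acquires content — is the identification of the $H$-module structure on $A$ coming from the extension $E\times_G H$ with the one pulled back along $f$. For this I would compute, for $a\in A$ and $h\in H$, the conjugate $\tilde\gamma(h)(\iota(a),1)\tilde\gamma(h)^{-1} = (\gamma(f(h))\iota(a)\gamma(f(h))^{-1},1)$ and observe that $\gamma(f(h))\iota(a)\gamma(f(h))^{-1} = \iota(f(h)\cdot a)$ by the very definition of the $G$-action on $A$ attached to $\gamma$ in the original extension; hence $h$ acts on $A$ through $f(h)$, exactly as required for $f^*\eta$. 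That $f^*\eta$ is a normalized $2$-cocycle for this module structure is then a routine substitution into the cocycle identity (apply $f$ throughout and use that it is a homomorphism), and in any case it also follows from the computation above. I do not expect any genuine obstacle here: the lemma is pure bookkeeping once one writes $E$ and $E\times_G H$ in the explicit set-with-twisted-multiplication form, and the whole argument is independent of the chosen section, since a different section changes $\eta$ by a coboundary and changes $\tilde\gamma$ compatibly.
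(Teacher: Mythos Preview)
Your argument is correct and is exactly the standard verification one would expect. Note, however, that the paper does not actually prove this lemma: it is stated with a reference to Weibel's textbook and then used without further comment. So there is no ``paper's own proof'' to compare against; your write-up simply supplies the omitted details, and does so cleanly.
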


In our situation, consider the extension \eqref{seq2}: 
$$
0 \to H \to F/[K,K] \to F^{\ab} \to 1.
$$
Since $[F,[F,F]] \subseteq [K,K]$, the action of $F^{\ab}$ by conjugation on $H$ is trivial. The group $F^{\ab}$ is a free $\Z/k\Z$-module with basis $\{a_1,\dots,a_{n-1},b_1,\dots,b_{m-1}\}$. We define the section $\gamma$ by
$$
\gamma\left(\sum_{i=1}^{n-1} r_ia_i + \sum_{i=1}^{n-1} r_i' b_i \right) = a_1^{r_1}\cdots a_{n-1}^{r_{n-1}} b_1^{r_1'} \cdots b_{m-1}^{r_{m-1}'}
$$
where $0 \le r_i,r_i' \le k-1$ for all $i$. Then
\begin{align*}
\left\langle \sum r_ia_i + \sum r_i' b_i, \sum s_ia_i + \sum s_i' b_i \right\rangle = \\
\sigma(
a_1^{r_1}\cdots a_{n-1}^{r_{n-1}} b_1^{r_1'} \cdots b_{m-1}^{r_{m-1}'}
a_1^{s_1}\cdots a_{n-1}^{s_{n-1}} b_1^{s_1'} \cdots b_{m-1}^{s_{m-1}'}
b_{m-1}^{-t_{m-1}'}\cdots b_1^{-t_1'} a_{n-1}^{-t_{n-1}}\cdots a_1^{-t_1}
),
\end{align*}
where $t_i$ can be equal to $r_i+s_i$ or to $r_i+s_i-k$ and similarly for $t_i'$. From the definition of $\sigma$ it follows that this is equal to
$$
-\sum_{i < j} r_j s_i \phi(a_i) \wedge \phi(a_j) + \sum_{i < j} r_j' s_i' \psi(b_i) \wedge \psi(b_j).
$$
Thus, the 2-cocycle is bilinear in our case.

Suppose we have an extension in the category of abelian groups
$$
0 \to A \to E \to G \to 0
$$
and we are given a surjective homomorphism $\Z^n \to G$, $e_i \mapsto g_i$ with kernel generated by some elements $r_j, j\in J$. Then we can define a homomorphism $A \oplus \Z^n \to E$ to be identity on $A$ and to map $e_i$ to $(0,g_i)$. It is well-defined, since $E$ is abelian and it is easy to see that it is in fact surjective and its kernel is generated by elements $(a_j,r_j), j\in J$, where $a_j\in A$ is the unique element such that $(a_j,r_j)$ maps to zero in $E$.

We have an exact sequence of abelian groups
$$
0 \to H \to K^{\ab} \to K/[F,F] \to 0.
$$
The 2-cocycle corresponding to it is the restriction of the 2-cocycle $\langle \cdot, \cdot \rangle$ to the subgroup $K/[F,F] \le F^{\ab}$, which we will denote by $\langle \cdot, \cdot \rangle$ by abuse of notation. The group $K/[F,F]$ is a free $\Z/k\Z$-module (recall that in all our cases we have $k$ prime). Thus it is a quotient of some $\Z^l$ by relations $kc_i = 0$. Since the 2-cocycle $\langle \cdot, \cdot \rangle$ is bilinear we have in the group $K^{\ab}$
$$
(0,c_i)^k = (\langle c_i,c_i \rangle + \cdots + \langle (k-1)c_i, c_i \rangle, kc_i) = \left( \frac{k(k-1)}2 \langle c_i, c_i \rangle, 0 \right).
$$
Therefore, to obtain the group $K^{\ab}$ we have to add to the abelian group $H$ new generators $f_i$ with relations $kf_i = -\frac{k(k-1)} 2 \langle c_i, c_i \rangle$. Now we will do this case by case.

\begin{theorem} The first homology groups of our surfaces are the following. \\
1. In case $G=(\Z/2\Z)^3$, $H_1(S,\Z) \cong (\Z/2\Z)^4 \oplus (\Z/4\Z)^2$. \\
2. In case $G=(\Z/2\Z)^4$, $H_1(S,\Z) \cong (\Z/4\Z)^4$. \\
3. In case $G=(\Z/3\Z)^2$, $H_1(S,\Z) \cong (\Z/3\Z)^5$. \\
4. In case $G=(\Z/5\Z)^2$, $H_1(S,\Z) \cong (\Z/5\Z)^3$.
\end{theorem}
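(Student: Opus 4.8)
The plan is to carry out the case-by-case computation that the reductions above have left us with. In each of the four cases I must (i) identify the finite abelian group $H$, i.e. compute the two elements $\frac{k(k-1)}2\sum_{i<j<n}\phi(a_i)\wedge\phi(a_j)$ and $\frac{k(k-1)}2\sum_{i<j<m}\psi(b_i)\wedge\psi(b_j)$ generating the subgroup of $\Lambda^2 G$ by which we quotient; (ii) identify $K/[F,F]=\ker(F^{\ab}\to G)$ together with an explicit $\Z/k\Z$-basis $c_1,\dots,c_l$; and (iii) feed these into the description obtained above, namely that $K^{\ab}$ is generated by $H$ and new elements $f_1,\dots,f_l$ with $kf_i=-\frac{k(k-1)}2\langle c_i,c_i\rangle$, where $\langle\cdot,\cdot\rangle$ is the bilinear cocycle computed above.

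Cases 3 and 4 ($k=3,5$) are immediate: the integer $\frac{k(k-1)}2$ is $3$ or $10$, hence $0$ modulo $k$, so it annihilates the $\Z/k\Z$-module $\Lambda^2 G$. Therefore both elements defining $H$ vanish, so $H=\Lambda^2 G\cong\Z/k\Z$ (here $G\cong(\Z/k\Z)^2$, so $\Lambda^2 G$ has rank $\binom{2}{2}=1$), and every correction term $-\frac{k(k-1)}2\langle c_i,c_i\rangle$ is zero; thus $K^{\ab}\cong\Z/k\Z\oplus(\Z/k\Z)^l$. Since $F^{\ab}\cong(\Z/k\Z)^{(n-1)+(m-1)}$ surjects onto $G\cong(\Z/k\Z)^2$ (the images of $a_1,\dots,a_{n-1}$ already generate $G$), we get $l=(n-1)+(m-1)-2$, equal to $4$ for $(n,m)=(4,4)$ and to $2$ for $(n,m)=(3,3)$, giving $(\Z/3\Z)^5$ and $(\Z/5\Z)^3$.

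Cases 1 and 2 ($k=2$, so $\frac{k(k-1)}2=1$) need a genuine computation. First, $\Lambda^2 G\cong(\Z/2\Z)^{\binom{d}{2}}$ with basis $\{e_i\wedge e_j\}_{i<j}$ and sign-free wedges; a short tally of the two defining elements gives $H\cong(\Z/2\Z)^2$ in case 1 (the $a$-relation collapses to $e_2\wedge e_3$, the $b$-relation to $0$) and $H\cong(\Z/2\Z)^4$ in case 2 (two independent relations in $(\Z/2\Z)^6$). Next, from the presentation every generator of $K^{\ab}$ has order dividing $4$ (as $4f_i=-2\langle c_i,c_i\rangle=0$), and $2K^{\ab}$ equals the $\Z/2\Z$-span of $\{w_i:=\langle c_i,c_i\rangle\}\subseteq H$; hence $K^{\ab}\cong(\Z/2\Z)^{s+l-2\rho}\oplus(\Z/4\Z)^\rho$ where $s=\dim_{\Z/2\Z}H$, $l=\dim_{\Z/2\Z}K/[F,F]$ and $\rho=\dim_{\Z/2\Z}\langle w_1,\dots,w_l\rangle$. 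One computes $l=6$ for case 1 and $l=4$ for case 2. It then suffices to pick an explicit $\Z/2\Z$-basis $c_1,\dots,c_l$ of $\ker(F^{\ab}\to G)$ (for instance with each $c_i$ involving exactly one $b$-generator, so that $\langle c_i,c_i\rangle=\sum_{\mu<\nu}r_\mu r_\nu\,\phi(a_\mu)\wedge\phi(a_\nu)$), evaluate the $w_i$, and read off $\rho$: this gives $\rho=2$ in case 1 (so $(\Z/2\Z)^4\oplus(\Z/4\Z)^2$) and $\rho=4$ in case 2 (so $(\Z/4\Z)^4$).

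The one step that is not pure bookkeeping is this last one: showing $\rho$ is as large as claimed, i.e. that the diagonal part $c\mapsto\langle c,c\rangle$ of the cocycle, restricted to $\ker(F^{\ab}\to G)$, spans all of $H$. Granting that, the isomorphism type of $K^{\ab}$ is forced by the order identity $|K^{\ab}|=|H|\cdot|K/[F,F]|=2^{s+l}$ together with $\dim_{\Z/2\Z}2K^{\ab}=\rho$, and the remainder is the finite $\Z/2\Z$-linear algebra with the explicit generators and homomorphisms recorded in Section~\ref{sec.1}.
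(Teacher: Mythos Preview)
Your proposal is correct and follows the paper's own approach almost verbatim: compute $H$, pick a $\Z/k\Z$-basis $c_1,\dots,c_l$ of $K/[F,F]$, and read off $K^{\ab}$ from the relations $kf_i=-\tfrac{k(k-1)}2\langle c_i,c_i\rangle$; your formula $K^{\ab}\cong(\Z/2\Z)^{s+l-2\rho}\oplus(\Z/4\Z)^\rho$ is just a clean repackaging of what the paper does directly. The only point where you stop short is the actual verification of $\rho$ in cases 1 and 2 --- the paper does write down an explicit basis (in case~1 one element $c_1=(a_1+a_4,0)$ has \emph{no} $b$-component, so your ``one $b$-generator per $c_i$'' suggestion needs a trivial tweak there) and checks by hand that the $w_i$ span $H$; this is exactly the finite $\Z/2\Z$-linear algebra you defer.
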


\begin{proof}
Case 1: The group $H$ is the quotient of $\Lambda^2 G$ by the subgroup generated by elements
$$
\sum_{i < j < 5}\phi(a_i)\wedge \phi(a_j) = e_2 \wedge e_3, \sum_{i < j < 6} \psi(b_i) \wedge \psi(b_j) = 0.
$$
That is, $H \cong (\Z/2\Z)^2$. We choose the basis of $K/[F,F]$ over $\Z/2\Z$: $c_1 = (a_1 + a_4, 0)$, $c_2 = (a_1 + a_2, b_1)$, $c_3 = (a_1 + a_3, b_2)$, $c_4 = (a_1 + a_2 + a_3, b_3)$, $c_5 = (a_1 + a_2, b_4)$, $c_6 = (a_1 + a_3, b_5)$. Then the group $K^{\ab}$ is the quotient of $H \oplus \Z^6$ by relations
\begin{align*}
2 f_1 & = -\langle c_1, c_1 \rangle = \phi(a_1) \wedge \phi(a_4) = 0, \\
2 f_2 & = e_1 \wedge e_2, \\
2 f_3 & = e_1 \wedge e_3, \\
2 f_4 & = e_1 \wedge (e_2+e_3) + e_2 \wedge e_3, \\
2 f_5 & = e_1 \wedge e_2, \\
2 f_6 & = e_1 \wedge e_3.
\end{align*}
The elements on the right hand side generate whole $H$. Therefore, $K^{\ab} \cong (\Z/2\Z)^4 \oplus (\Z/4\Z)^2$.

Case 2: The group $H$ is the quotient of $\Lambda^2 G$ by the subgroup generated by elements
\begin{align*}
\sum_{i < j < 4} \phi(a_i)\wedge \phi(a_j) & = e_1 \wedge (e_2+e_3+e_4) + e_2 \wedge (e_3+e_4) + e_3 \wedge e_4, \\
\sum_{i < j < 4} \psi(b_i) \wedge \psi(b_j) & = e_1 \wedge e_4 + e_2 \wedge e_3 + e_3 \wedge e_4.
\end{align*}
That is, $H \cong (\Z/2\Z)^4$. We choose the basis of $K/[F,F]$ over $\Z/2\Z$: $c_1 = (a_2 + a_3 + a_4, b_1)$, $c_2 = (a_1 + a_3 + a_4, b_2)$, $c_3 = (a_1 + a_3, b_3)$, $c_4 = (a_2 + a_4, b_4)$. Then the group $K^{\ab}$ is the quotient of $H \oplus \Z^4$ by relations
\begin{align*}
2 f_1 & = e_2 \wedge e_3 + e_2 \wedge e_4 + e_3 \wedge e_4, \\
2 f_2 & = e_1 \wedge e_3 + e_1 \wedge e_4 + e_3 \wedge e_4, \\
2 f_3 & = e_1 \wedge e_3, \\
2 f_4 & = e_2 \wedge e_4.
\end{align*}
The elements on the right hand side are linearly independent in $H$. Therefore, $K^{\ab} \cong (\Z/4\Z)^4$.

Cases 3 and 4: Since $k$ is odd, the number $\frac{k(k-1)}2$ is divisible by $k$. Thus, $H$ is the quotient of $\Lambda^2 G$ by trivial elements, $H = \Lambda^2 G$. And the relations on generators $f_i$ are just $kf_i = 0$. This means that $K^{\ab} \cong \Lambda^2 G \oplus K/[F,F]$. In case 3, $K^{\ab} \cong (\Z/3\Z)^5$. In case $4$, $K^{\ab} \cong (\Z/5\Z)^3$.
\end{proof}

\def\cprime{$'$} \def\cprime{$'$}
\providecommand{\bysame}{\leavevmode\hbox to3em{\hrulefill}\thinspace}
\providecommand{\MR}{\relax\ifhmode\unskip\space\fi MR }
\providecommand{\MRhref}[2]{%
  \href{http://www.ams.org/mathscinet-getitem?mr=#1}{#2}
}
\providecommand{\href}[2]{#2}

\end{document}